\date{}
\title{\vspace{-1.2cm}Rainbow saturation and graph capacities}
\author{
D\'aniel Kor\'andi \thanks{Institute of Mathematics, EPFL, Lausanne, Switzerland. Research supported in part by SNSF grants 200020-162884 and 200021-175977. Email: daniel.korandi@epfl.ch.}
}
\theoremstyle{plain}
\newtheorem{THM}{Theorem}[section]
\newtheorem*{THM*}{Theorem}
\newtheorem{COR}[THM]{Corollary}
\newtheorem{CONJ}[THM]{Conjecture}
\theoremstyle{definition}
\newcommand{\subs}{\subseteq}
\newcommand{\eps}{\varepsilon}
\newcommand{\calG}{\mathcal{G}}
\newcommand{\calT}{\mathcal{T}}
\DeclareMathOperator{\rsat}{rsat}
\begin{document}
\maketitle

\begin{abstract}
The $t$-colored rainbow saturation number $\rsat_t(n,F)$ is the minimum size of a $t$-edge-colored graph on $n$ vertices that contains no rainbow copy of $F$, but the addition of any missing edge in any color creates such a rainbow copy. Barrus, Ferrara, Vandenbussche and Wenger conjectured that $\rsat_t(n,K_s) = \Theta(n\log n)$ for every $s\ge 3$ and $t\ge \binom{s}{2}$. In this short note we prove the conjecture in a strong sense, asymptotically determining the rainbow saturation number for triangles. 
Our lower bound is probabilistic in spirit, the upper bound is based on the Shannon capacity of a certain family of cliques.
\end{abstract}

\section{Introduction}

A graph $G$ is called \emph{$F$-saturated} if it is a maximal $F$-free graph. The classic saturation problem, first studied by Zykov \cite{Z49} and Erd\H{o}s, Hajnal and Moon \cite{EHM64}, asks for the minimum number of edges in an $F$-saturated graph (as opposed to the Tur\'an problem, which asks for the maximum number of edges in such a graph). A rainbow analog of this problem was recently introduced by Barrus, Ferrara, Vandenbussche and Wenger \cite{BFVW17}, where a $t$-edge-colored graph is defined to be \emph{rainbow $F$-saturated} if it contains no rainbow copy of $F$ (i.e., a copy of $F$ where all edges have different colors), but the addition of any missing edge in any color creates such a rainbow copy. Then the $t$-colored rainbow saturation number $\rsat_t(n,F)$ is the minimum size of a $t$-edge-colored rainbow $F$-saturated graph.

Among other results, Barrus et al. showed that $\Omega\left(\frac{n\log n}{\log\log n}\right) \le \rsat_t(n,K_s) \le O(n\log n)$ and conjectured that their upper bound is of the right order of magnitude:
\begin{CONJ}[\cite{BFVW17}] \label{conj}
For $s\ge 3$ and $t\ge \binom{s}{2}$, $\rsat_t(n,K_s) = \Theta(n\log n)$.
\end{CONJ}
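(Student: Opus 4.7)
My plan is to focus on triangles, i.e.\ the case $s=3$ and $t\ge 3$: the general statement reduces to this by inspecting the common neighbourhood of a fixed $K_{s-2}$, and since Barrus--Ferrara--Vandenbussche--Wenger already supplied a matching $O(n\log n)$ upper bound, the real content of the conjecture is the lower bound $\rsat_t(n,K_3) = \Omega(n\log n)$. The abstract's promise of asymptotic sharpness additionally demands matching leading constants on both sides.

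\textbf{Lower bound.} I would begin by unpacking the saturation condition: for every non-edge $uv$ and every colour $c$, there is a common neighbour $w$ of $u$ and $v$ with $c(uw)$ and $c(vw)$ distinct and both different from $c$. Counting, for a fixed vertex $v$ and colour $c$, the non-neighbours of $v$ against the witnesses that serve them yields the local inequality
\[
    \sum_{\substack{w\in N(v)\\ c(vw)\ne c}} d(w) \;\ge\; n-1-d(v),
\]
and averaging over $c$ gives $\sum_w d(w)^2 = \Omega(n^2)$. This bound alone is too weak, as it is compatible with $e(G)=O(n)$ supported on a handful of universal vertices. To extract the extra $\log n$ factor I would introduce a probabilistic step: a uniformly random forbidden colour $c^*$ ``kills'' a witness $w$ at $v$ precisely when $c(vw)=c^*$, so colour-concentrated neighbourhoods are wasteful. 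Formalising this via an entropy or Shearer-type argument on the joint colour profiles $\chi_v:N(v)\to[t]$, I expect to show that the set of vertices of degree at least $\delta\log n$ has size $\Omega(n)$, which yields $e(G)=\Omega(n\log n)$.

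\textbf{Upper bound.} A natural matching construction builds $G$ as a strong power $H^{\boxtimes k}$ of a small gadget graph $H$, coloured so that each non-edge of the power admits a rainbow-triangle witness supported on a single coordinate. With $q=|V(H)|$ and $n=q^k$, the edge count scales like $e(G)\sim k\cdot n\cdot c_H = \Theta(n\log n)$, and the leading constant $c_H$ is controlled by the Shannon capacity of an auxiliary family of cliques encoding the forbidden rainbow colourings---the source of the ``Shannon capacity'' claim in the abstract. Verifying that a specific $H$ attains the extremal constant is then a separate calculation.

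\textbf{Main obstacle.} The hardest step is the probabilistic promotion of $\sum_w d(w)^2=\Omega(n^2)$---which the elementary count delivers immediately---to an \emph{average} degree bound of $\Omega(\log n)$. The colour-averaging argument must exploit the rainbow constraint simultaneously for \emph{every} forbidden colour $c$, not just on average; a naive application loses the logarithmic factor. Matching the Shannon-capacity constant in the upper bound additionally requires tracking the entropy bookkeeping to logarithmic precision, so one should design the probabilistic argument to dovetail with the capacity formula from the start.
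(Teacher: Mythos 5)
Your plan correctly isolates the real content of the conjecture (the lower bound; the $O(n\log n)$ upper bound is already in \cite{BFVW17}), and you correctly diagnose that the elementary count $\sum_w d(w)^2=\Omega(n^2)$ is too weak. But the step you yourself flag as hardest --- promoting this to ``$\Omega(n)$ vertices of degree $\Omega(\log n)$'' via an unspecified ``entropy or Shearer-type argument'' --- is exactly the missing idea, and it is not a routine averaging. The paper's mechanism is concrete and worth contrasting with your sketch: split the vertices into the set $A=\{a_1,\dots,a_k\}$ of degree at least $\log^3 n$ (so one may assume $|A|\le n/\log n$) and the low-degree rest $B$; encode each $v\in B$ by the string of colors of its edges into $A$; and blow each string up to a set $X_v\subseteq[t-s+2]^k$ by freely filling in the positions where $v$ misses $a_i$ or uses one of the $s-2$ most popular colors. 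Saturation (adding $vw$ in color $t$ creates a rainbow $K_s$, which must take its remaining $s-2$ vertices from $A$ when $v,w$ have no common neighbour in $B$) forces $X_v\cap X_w=\emptyset$ for all but $O(d^2)$ partners $w$ of each $v$, so $\sum_v |X_v|\le (d^2+1)(t-s+2)^k$, and Jensen turns this volume/packing bound into $\sum_v d_v\ge (1-o(1))\frac{t}{t-s+2}\,n\log_{t-s+2}n$. Without an argument of this packing type your sketch does not close. Note also that this handles every $s\ge 3$ directly, whereas your proposed reduction of general $s$ to $s=3$ by passing to the common neighbourhood of a fixed $K_{s-2}$ is unjustified: when an edge is added inside that common neighbourhood, the rainbow $K_s$ it creates need not contain the fixed clique, so the restricted colored graph need not be rainbow-$K_3$-saturated.

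Your upper-bound sketch (needed only for the sharp constant at $s=3$, not for the conjecture itself) also does not work as stated: a strong power $H^{\boxtimes k}$ with $n=q^k$ vertices has degrees of order $\prod_i(d(v_i)+1)$, which grow exponentially in $k$ unless $H$ is essentially edgeless, so ``$e(G)\sim k\cdot n\cdot c_H$'' fails for that product. The Shannon-capacity input enters differently in the paper: one takes a code $X\subseteq[t]^k$ of size $m=(t-1)^{(\frac{t-1}{t}-o(1))k}$, coming from the capacity of the family of all $(t-1)$-cliques on $[t]$ via the Gargano--K\"orner--Vaccaro theorem \cite{GKV94}, and builds a complete bipartite graph between the $k$ coordinate positions and the $m$ codewords, coloring the edge from codeword $x$ to position $i$ by $x(i)$. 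Every missing edge inside the codeword side, in every color, then closes a rainbow triangle through some position, and a maximal rainbow-triangle-free supergraph has only $km+\binom{k}{2}=\Theta(n\log n)$ edges with the capacity-optimal constant.
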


Here we prove this conjecture in a strong sense: we give a lower bound that is asymptotically tight for triangles. 

\begin{THM} \label{thm:main}
For $s\ge 3$ and $t\ge \binom{s}{2}$, we have
	\[ \rsat_t(n,K_s) \ge \frac{t(1+o(1))}{(t-s+2)\log(t-s+2)}n\log n \]
with equality for $s=3$.
\end{THM}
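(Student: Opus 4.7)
The plan for the lower bound is as follows. Let $G$ be a rainbow $K_s$-saturated $t$-edge-colored graph on $n$ vertices, fix a vertex $v$ with neighbors $w_1,\ldots,w_{d_v}$ and edge-colors $\gamma_i := c(vw_i)$, and to every non-neighbor $u$ of $v$ associate the \emph{profile} $\tau(u) \in (\{*\} \cup [t])^{d_v}$ given by $\tau(u)_i = c(uw_i)$ if $uw_i \in E(G)$ and $\tau(u)_i = *$ otherwise. The rainbow-$K_s$-saturation condition translates into two constraints on these profiles: (a)~for every non-neighbor $u$ and every color $c \in [t]$ there exist indices $i_1,\ldots,i_{s-2}$ such that $\{w_{i_1},\ldots,w_{i_{s-2}}\}$ is a rainbow $(s-2)$-clique inside $\{w_i : \tau(u)_i \ne *\}$ whose $\binom{s-2}{2}$ internal edge-colors, together with the $\gamma_{i_k}$, the $\tau(u)_{i_k}$, and $c$, form a set of $\binom{s}{2}$ pairwise distinct colors in $[t]$; and (b)~the absence of a rainbow $K_s$ in $G$ forbids certain partial profile configurations.

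The probabilistic part of the argument is to pick a uniformly random color $c^* \in [t]$ and study the saturation constraint only at $c = c^*$. In expectation a $(t-1)/t$-fraction of the coordinates $i$ satisfy $\gamma_i \ne c^*$ and can therefore contribute to a witness; on each surviving coordinate, combining the no-rainbow condition with the saturation condition at $c^*$ cuts the $t+1$ a priori possible values down to at most $t-s+2$ effective choices. This is expected to yield a bound of the form $n \le (t-s+2)^{d_v \cdot (t-1)/t \cdot (1+o(1))}$, after a multiplicity argument that invokes the pairwise saturation condition (for two non-neighbors of $v$) to rule out too many of them sharing the same profile. Summing the resulting per-vertex degree lower bound over $v$ and averaging carefully over $c^*$ to recover the correct constant should give $|E(G)| \ge \frac{t(1+o(1))}{(t-s+2)\log(t-s+2)}\,n\log n$.

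For the matching upper bound when $s=3$, the plan is a Shannon-capacity-based construction. Fix a set $H$ of $k \approx \log_{t-1} n$ ``hubs'' equipped with a rainbow-triangle-free $t$-coloring of $K_H$, and attach $n-k$ ``leaves'' to every hub; encode the colors of the leaf-to-hub edges by a vector $\phi(v) \in [t]^k$. Rainbow-triangle-freeness becomes a local constraint at each hub-hub edge, while rainbow-saturation of a missing leaf-leaf edge $uv$ in a color $c$ amounts to the existence of a coordinate $i$ with $|\{\phi(u)_i, \phi(v)_i, c\}| = 3$. This last property is an independent-set condition in a strong power of an auxiliary family of cliques on $[t]$, whose Shannon capacity turns out to equal $t-1$; this allows for $(t-1)^{k(1+o(1))} \ge n$ admissible leaf vectors and yields the desired edge count $\frac{t(1+o(1))}{(t-1)\log(t-1)}\,n\log n$.

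The hardest step is the profile-counting estimate in the lower bound. A priori, distinct non-neighbors of $v$ may share a profile, so the argument must exploit the pairwise saturation condition -- not just the single-non-neighbor version -- to rule out that a profile class is too crowded. Extracting the sharp leading constant $\tfrac{t}{t-s+2}$ further relies on averaging the random color $c^*$ correctly and on the simultaneous use of the no-rainbow-$K_s$ and the saturation constraints; identifying the relevant Shannon-capacity statement (and verifying that the capacity is $t-1$) is the analogous technical ingredient on the construction side.
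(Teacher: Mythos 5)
Your upper bound plan is essentially the paper's construction: a complete bipartite graph between $k$ hubs and $m$ leaves whose leaf vectors form an independent set in a strong power of the family of all $(t-1)$-cliques on $[t]$, completed to a maximal rainbow-triangle-free graph (the completion only adds hub--hub edges, a negligible $O(k^2)$). Two slips there: the relevant capacity is $\frac{t-1}{t}\log(t-1)$, not $t-1$, so the correct hub count is $k=\frac{t(1+o(1))}{(t-1)\log(t-1)}\log m$ rather than $k\approx\log_{t-1}n$; with your value of $k$ the claimed edge count $\frac{t(1+o(1))}{(t-1)\log(t-1)}\,n\log n$ would not follow. These are fixable bookkeeping errors.

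The lower bound, however, has a genuine gap in the multiplicity step. You define profiles of the non-neighbors of a single vertex $v$ with respect to $N(v)=\{w_1,\dots,w_{d_v}\}$ and propose to bound how many non-neighbors share a profile via ``the pairwise saturation condition.'' But for two non-neighbors $u,u'$ of $v$, adding $uu'$ creates a rainbow $K_s$ whose remaining $s-2$ vertices are common neighbors of $u$ and $u'$, and these may lie entirely outside $N(v)$; the profiles $\tau(u),\tau(u')$ carry no information about such a witness, so nothing prevents arbitrarily many non-neighbors of $v$ from having identical (for instance all-$*$) profiles. The paper's proof circumvents exactly this obstacle: it indexes the profiles by the set $A$ of \emph{all} vertices of degree at least $\log^3 n$ (either $|A|\le n/\log n$ or we are already done), and it only compares pairs $v,w$ in the low-degree part $B$ that are non-adjacent \emph{and} have no common neighbor in $B$ --- for such pairs the witnessing clique is forced into $A$, and each vertex of $B$ has at most $d^2$ excluded partners, which costs only a harmless additive $\log(d^2+1)$. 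Without some version of this high/low-degree split, your per-vertex inequality $n\le(t-s+2)^{d_v(t-1)/t\,(1+o(1))}$ cannot be established. Separately, the fraction $(t-1)/t$ you extract by randomizing over $c^*$ gives the right constant only for $s=3$: the paper instead discards the $s-2$ globally most common colors, which yields the factor $(t-s+2)/t$ in the exponent and hence the stated constant $\frac{t}{(t-s+2)\log(t-s+2)}$.
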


We should point out that Conjecture \ref{conj} was independently verified by Gir\~{a}o, Lewis and Popielarz \cite{GLP} and by Ferrara et al. \cite{FJL}, but with somewhat weaker bounds. In fact, our result proves a conjecture in \cite{GLP}, establishing the stronger estimate $\rsat_t(n,K_s)= \Theta_s(\frac{n\log n}{\log t})$ with their upper bound.

Our lower bound is probabilistic in spirit, using ideas of Katona and Szemer\'edi \cite{KS67}, and F\"uredi, Horak, Pareek and Zhu \cite{FHPZ98} (similar techniques were used in \cite{KLSS99,BS07,KS17}). The upper bound for $s=3$ is based on the following theorem that follows from a strong information-theoretic result of Gargano, K\"orner and Vaccaro \cite{GKV94} on the Shannon capacities of graph families.

\begin{THM} \label{thm:capacity}
For every $t\ge 3$, there is a set $X\subs [t]^k$ of $m=(t-1)^{(\frac{t-1}{t}-o(1))k}$ strings of length $k$ from alphabet $[t]=\{1,\dots,t\}$ such that for any $x,x'\in X$ and any $a\in[t]$, there is a position $i$ where $x(i)\ne x'(i)$ and $x(i),x'(i)\ne a$.
\end{THM}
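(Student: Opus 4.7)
The plan is to recognise Theorem~\ref{thm:capacity} as the computation of a Shannon-type graph capacity and then invoke the information-theoretic result of \cite{GKV94}. For each $a\in[t]$ let $H_a$ be the graph on vertex set $[t]$ in which $u\sim v$ iff $u\ne v$ and $a\notin\{u,v\}$, so that $H_a$ is a copy of $K_{t-1}$ on $[t]\setminus\{a\}$ with $a$ as an isolated vertex. The condition demanded of $X$ in the theorem translates directly: for every $x\ne x'$ in $X$ and every $a\in[t]$, some coordinate pair $(x(i),x'(i))$ must be an edge of $H_a$. Writing $\overline{H}_a$ for the complement (the star $K_{1,t-1}$ centred at $a$), this is precisely the assertion that $X$ is an independent set in the strong power $\overline{H}_a^{\boxtimes k}$ for every $a\in[t]$ simultaneously.

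The maximum $|X|$ with this property is therefore the joint independence number of the family $\{\overline{H}_a^{\boxtimes k}\}_{a\in[t]}$, whose exponential rate as $k\to\infty$ is exactly the Shannon capacity of the graph family $\{\overline{H}_a\}_{a\in[t]}$. This is a symmetric family of stars on $[t]$; by the $S_t$-symmetry and the fractional-entropy / qualitative-independence formula of Gargano, K\"orner and Vaccaro, its capacity works out to $\tfrac{t-1}{t}\log(t-1)$. Converting back, this produces a set $X\subseteq[t]^k$ of size $(t-1)^{((t-1)/t-o(1))k}$, as claimed.

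The main substantive step is this translation; once the problem is placed in the GKV framework, the desired $X$ is obtained by extracting it directly from their theorem. The point requiring the most care is identifying the correct instance of the GKV formula for our family of stars and checking that the resulting rate is indeed $\tfrac{t-1}{t}\log(t-1)$; this is a short entropy calculation that exploits the symmetry of the family (the uniform distribution on $[t]$ is the optimiser). I note in passing that a naive random construction, picking strings uniformly at random from $[t]^k$ and thinning via a union bound, gives only the weaker exponent $\tfrac12\log\!\bigl(t^2/(3t-2)\bigr)$, so the stronger information-theoretic input of \cite{GKV94} is genuinely needed here.
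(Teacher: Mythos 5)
Your proposal takes essentially the same route as the paper: the paper likewise identifies the required $X$ as an optimal set for the Shannon capacity of the family of all $(t-1)$-cliques on $[t]$ (equivalently, independent sets in the strong powers of the complementary stars) and applies the Gargano--K\"orner--Vaccaro max--min formula with the uniform distribution to get the rate $\frac{t-1}{t}\log(t-1)$, i.e.\ Corollary~\ref{cor:main} with $s=t-1$. The only difference is that the paper carries out the capacity evaluation explicitly (a typical-sequence construction for the lower bound and a Jensen-type counting argument for the upper bound) rather than citing it as a standard entropy computation, but the reduction and the resulting exponent are identical.
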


In the next section we derive Theorem \ref{thm:capacity} from results about the Shannon capacity of graph families. This is followed by the proof of Theorem \ref{thm:main} in Section \ref{sec:main}.

\section{Graph capacities}

Let $\calG=\{G_1,\dots,G_r\}$ be a family of graphs on vertex set $[t]$. Let $N_k$ be the maximum size of a set $X\subs[t]^k$ of strings of length $k$ on alphabet $[t]$ such that for any two strings $x,x'\in X$ and any $G_j\in \calG$, there is a position $i_j\in [k]$ such that $x(i_j)x'(i_j)$ is an edge in $G_j$. The \emph{Shannon capacity} of the family $\calG$ is defined as $C(\calG)=\limsup_{k\to\infty}\frac{1}{k}\log N_k$  (see, e.g., \cite{S,CK}\footnote{The usual definition is with binary logarithm, but the base of our logarithms is unimportant for our purposes.}). When $\calG=\{G\}$, we  simply write $C(G)$ for $C(\calG)$.

We need an analogous definition for strings where the occurrences of each $a\in[t]$ are proportional to some probability measure $P$ on $[t]$. So let $\calT^k(P,\eps)$ be the set of all strings $x\in[t]^k$ such that $|\frac{1}{k}\#\{i:x(i)=a\}-P(a)|<\eps$ for every $a\in[t]$, and let $M_{k,\eps}$ be the maximum size of a set $X\subs \calT^k(P,\eps)$ such that for every $x,x'\in X$ there is an $i$ with $x(i)x'(i)\in G$. The Shannon capacity within type $P$ is $C(G,P)=\lim_{\eps\to 0}\limsup_{k\to\infty}\frac{1}{k}\log M_{k,\eps}$. Using a clever construction, Gargano, K\"orner and Vaccaro \cite{GKV94} showed that $C(\calG)$ can be expressed in terms of the $C(G_j,P)$:
\begin{THM}[\cite{GKV94}] \label{thm:korner}
For a family of graphs $\calG=\{G_1,\dots,G_r\}$ on vertex set $[t]$, we have
\[C(\calG)=\max_P \min_{G_j\in \calG} C(G_j,P).\]
\end{THM}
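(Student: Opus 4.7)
The plan is to prove the upper and lower bounds of $C(\calG)=\max_P\min_j C(G_j,P)$ separately.

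For the upper bound $C(\calG)\le \max_P\min_j C(G_j,P)$, I would use a standard type-counting argument. Let $X\subs[t]^k$ be an extremal family of size $N_k$ with the $\calG$-capacity property, and partition $X$ by the types of its strings. Since $[t]^k$ supports at most $(k+1)^t$ distinct types, pigeonhole produces a single type class $X_Q$ of cardinality at least $N_k/(k+1)^t$. The subset $X_Q$ inherits the $\calG$-capacity property and lies inside $\calT^k(Q,0)$, so for every $j$ and every $\eps>0$ we have $|X_Q|\le M_{k,\eps}(G_j,Q)$. Minimising over $j$, maximising over $Q$, and absorbing the polynomial $(k+1)^t$ factor into $\limsup\frac{1}{k}\log$, yields the bound.

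For the lower bound $C(\calG)\ge \max_P\min_j C(G_j,P)$, fix a type $P$ attaining the maximum, and set $c_j=C(G_j,P)$ and $c=\min_j c_j$. The task is to construct, for every $\eps>0$ and all large enough $k$, a subset $X\subs\calT^k(P,\eps)$ of size $e^{k(c-o(1))}$ such that every pair $x,x'\in X$ is $G_j$-good for all $j$ simultaneously. By definition of $C(G_j,P)$, for each graph a near-optimal single-graph code $Y_j\subs\calT^k(P,\eps)$ of size $e^{k(c_j-o(1))}$ exists. The obvious blockwise construction, partitioning $[k]$ into $r$ disjoint blocks and using one $Y_j$ per block, realises only the harmonic-mean rate $(\sum_j 1/c_j)^{-1}$, so the substance of the theorem is that these codes can in fact be combined with no exponential loss.

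The main obstacle, and the heart of the GKV argument, is this merging step. My plan would follow a superposition strategy: because $\calT^k(P,\eps)$ is a transitive $S_k$-set (coordinate permutations preserve type) and the $G_j$-good relation on $\calT^k(P,\eps)$ is $S_k$-invariant, any single code $Y_{j^*}$ for the bottleneck graph can be transported by random permutations to cover large portions of the type class in an essentially uniform way. The plan is to show, by iterating over $j$, that a suitably chosen sequence of coordinate permutations aligns all the $Y_j$'s inside a common type class so that the resulting set still has $e^{k(c-o(1))}$ pairwise distinct strings and every pair is $G_j$-good for every $j$. Verifying that this alignment costs only $o(1)$ in the exponent---rather than the loss one would get from a naive blockwise or union-bound argument---is the delicate technical step. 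I expect it to require the full method-of-types calculus: entropy bounds for joint types of pairs, typicality arguments to control the number of $G_j$-bad pairs inside $\calT^k(P,\eps)^2$, and uniform continuity of $C(G_j,P)$ in $\eps$. Once the merged code is in hand, the $\calG$-capacity property and the rate $c$ follow by construction.
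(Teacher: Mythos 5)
First, a point of reference: the paper does not actually prove Theorem~\ref{thm:korner} --- it is quoted from \cite{GKV94} and used as a black box (the ``self-contained argument'' in Section~2 concerns Corollary~\ref{cor:main}, and even there the lower bound invokes Theorem~\ref{thm:korner}). So your attempt has to be measured against the original Gargano--K\"orner--Vaccaro proof rather than anything in this note. Your converse direction, $C(\calG)\le\max_P\min_j C(G_j,P)$, is correct and is indeed the standard argument: partition an extremal code by exact type, pigeonhole into one of at most $(k+1)^t$ classes, note that the surviving class is a $G_j$-good code inside $\calT^k(Q,\eps)$ for every $j$ and every $\eps>0$, and absorb the polynomial loss. (You should add the routine compactness step: the selected type depends on $k$, so one passes to a subsequence along which the empirical distributions converge to some $P^*$ before sending $\eps\to0$.)

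The direct part, however, is where the entire content of the theorem lives, and your proposal does not prove it. You correctly diagnose that blockwise concatenation only achieves the harmonic mean of the $C(G_j,P)$, but the ``superposition by random coordinate permutations'' offered in its place is left as a plan, and its most natural implementation fails for a concrete counting reason. The only way the permuted copies $\sigma_j(Y_j)$ guarantee that \emph{every} pair in the merged set is $G_j$-good for \emph{every} $j$ is if the merged set sits inside $\bigcap_j\sigma_j(Y_j)$; but for independent random $\sigma_j$ the expected size of this intersection is about $|\calT^k(P,\eps)|\prod_j\bigl(|Y_j|/|\calT^k(P,\eps)|\bigr)\approx e^{k(rc-(r-1)H(P))}$, where $H(P)=-\sum_aP(a)\log P(a)$, and this is exponentially \emph{small} whenever $c<\frac{r-1}{r}H(P)$. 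That is precisely the regime relevant to this paper: in Corollary~\ref{cor:main} one has $c=\frac{s}{t}\log s$ against $H(P)=\log t$ with $r=\binom{t}{s}$ graphs. No typicality refinement or union bound rescues a count that is already below $1$ in expectation, so ``verifying that the alignment costs only $o(1)$'' is not a technical chore you can defer --- it is the theorem, and it requires the genuinely different construction of \cite{GKV94} (the ``clever construction'' alluded to just before the statement). As written, the proposal establishes only the easy inequality.
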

In fact, they proved a more general result for \emph{Sperner capacities}, the analogous notion for directed graphs. What we need is a corollary that follows easily from this theorem using standard tools about graph entropy (see the survey of Simonyi \cite{S} for more information). Here we give a self-contained argument that goes along the lines of a proof by Gargano, K\"orner and Vaccaro \cite{GKV93} of the case $s=2$.
\begin{COR} \label{cor:main}
Let $2\le s\le t$ be an integer and let $\calG$ be the family of all $s$-cliques on $[t]$ (each with $t-s$ isolated vertices). Then $C(\calG)=\frac{s}{t}\log s$.
\end{COR}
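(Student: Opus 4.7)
The plan is to apply Theorem \ref{thm:korner} to the family $\calG$ of all $s$-cliques on $[t]$, which gives
$C(\calG) = \max_P \min_{|S|=s} C(K_s^{(S)}, P)$,
where $K_s^{(S)}$ denotes the clique on the $s$-subset $S\subseteq [t]$. I would then establish the matching upper and lower bounds on this max--min separately.

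For the lower bound $C(\calG) \ge (s/t)\log s$, I would take $P$ to be uniform on $[t]$ and, for each $s$-set $S$, exhibit $s^{ks/t\,(1+o(1))}$ pairwise $S$-separated strings of uniform type by a direct construction: fix a ``host'' set $T \subseteq [k]$ of size $ks/t$, allow $x|_T$ to range over all strings on $S$ of uniform subtype (each $a \in S$ appearing $k/t$ times), and fix $x|_{[k]\setminus T}$ to be any single uniform-type arrangement on $[t]\setminus S$. The count is the multinomial $(ks/t)!/((k/t)!)^s \sim s^{ks/t}$ (Stirling), and any two such strings differ somewhere in $T$ where both values lie in $S$, yielding $S$-separation. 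Hence $\min_{|S|=s} C(K_s^{(S)}, U) \ge (s/t)\log s$.

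For the upper bound, I would prove the key lemma that $C(K_s^{(S)}, P) \le P(S) \log s$ for every $P$ and every $S$, and then apply it as follows: by the double-count $\sum_{|S|=s} P(S) = \binom{t-1}{s-1}$, some $s$-set $S^*$ satisfies $P(S^*) \le s/t$, so $\min_{|S|=s} C(K_s^{(S)}, P) \le C(K_s^{(S^*)}, P) \le (s/t)\log s$ for every $P$.

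The key lemma is the only non-routine step and where I expect the main obstacle to lie. My approach: given an $\eps$-typical code $X \subseteq \calT^k(P,\eps)$ of pairwise $S$-separated strings, set $q = P(S)$ and $T_x = \{i : x(i) \in S\}$ for each $x \in X$, noting that $|T_x| = kq \pm O(\eps k)$. I would count pairs $(x, \tilde y)$ with $x \in X$ and $\tilde y \in S^k$ any full extension of the partial function $x|_{T_x}$ (obtained by filling positions outside $T_x$ with arbitrary $S$-values): from the $x$-side, each $x$ has at least $s^{k(1-q)-O(\eps k)}$ extensions, giving at least $|X| \cdot s^{k(1-q)-O(\eps k)}$ pairs in all. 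Crucially, if two distinct $x,x' \in X$ shared an extension $\tilde y$, then $x|_{T_x}$ and $x'|_{T_{x'}}$ would both have to agree with $\tilde y$ on $T_x \cap T_{x'}$, hence with each other---contradicting $S$-separation. So each of the $s^k$ candidates for $\tilde y$ extends at most one code-word, giving $|X| \le s^{kq+O(\eps k)}$ and the lemma in the limit $\eps\to 0$. The punchline is that $S$-separation is exactly the property that forbids two code-words from admitting a common $S$-valued extension, which is what makes the counting collapse into the desired bound.
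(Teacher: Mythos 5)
Your proof is correct, and the engine of your upper bound --- replacing the letters outside $S$ by arbitrary elements of $S$ and observing that $S$-separation forces the resulting sets of extensions to be pairwise disjoint --- is exactly the counting idea in the paper. The surrounding architecture differs, however. The paper's upper bound never invokes Theorem \ref{thm:korner}: it works directly with a code $X$ separated with respect to \emph{all} cliques of $\calG$, chooses $S=[s]$ to be the $s$ least frequent letters over the whole code (so that $\sum_{x}d_x\ge \frac{t-s}{t}mk$, where $d_x$ counts the non-$S$ letters of $x$), and uses Jensen's inequality to control $\sum_x s^{d_x}$ despite the $d_x$ varying from codeword to codeword. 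You instead route the upper bound through the max--min formula as well, isolating the per-type, per-clique lemma $C(K_s^{(S)},P)\le P(S)\log s$ --- where $\eps$-typicality makes all codewords have essentially the same number of non-$S$ positions, so no convexity argument is needed --- and then average $P(S)$ over the $\binom{t}{s}$ cliques (using $\sum_{|S|=s}P(S)=\binom{t-1}{s-1}$) to find $S^*$ with $P(S^*)\le s/t$. Your version is more modular, and the key lemma is a clean statement of independent interest; the price is that your upper bound leans on the $\le$ direction of Theorem \ref{thm:korner} (i.e.\ the type-class decomposition of an arbitrary code), which the paper's self-contained argument does not need. The lower bound constructions are essentially identical, and both use the hard ($\ge$) direction of Theorem \ref{thm:korner} in the same way.
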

\begin{proof} For the lower bound, we can take $P$ to be the uniform measure on $[t]$. Then by Theorem \ref{thm:korner}, it is enough to show that $C(G,P)\ge \frac{s}{t}\log s$ where $G$ is a clique on $[s]$ with isolated vertices $s+1,\dots,t$. Let $X_k\subs\calT^k(P,\frac{1}{k})$ be the set of all strings $x$ of length $k$ such that the first $\lfloor sk/t\rfloor$ letters of $x$ contain $\lfloor k/t\rfloor$ or $\lceil k/t\rceil$ instances of each $a\in[s]$, and $x(i)=b$ for every $s+1\le b\le t$ and $\frac{(b-1)k}{t}<i\le \frac{bk}{t}$. Then
\[C(G,P) \ge \lim_{k\to\infty} \frac{\log(X_k)}{k} =  \lim_{k\to\infty} \frac{1}{k}\log \frac{(\frac{sk}{t})!}{((\frac{k}{t})!)^s} =  \lim_{k\to\infty} \frac{1}{k} \log(s^{sk/t}) = \frac{s}{t}\log s.\]

For the upper bound, let $X\subs [t]^k$ be a maximum set of strings such that for any $x,x'\in X$ and for every $s$-clique $G\in \calG$, there is an $i\in [k]$ such that $x(i)x'(i)\in G$. We set $m=|X|$ to be this maximum. We may assume that $\{1,\dots,s\}$ are the $s$ least frequent elements appearing in the strings of $X$. Let $d_x$ be the number of elements in $x\in X$ that are not in $[s]$, so $\sum_{x\in X}d_x\ge \frac{t-s}{t}mk$, and let $X_x$ be the set of strings obtained from $x$ by replacing these elements arbitrarily with numbers from $[s]$. Then $|X_x|=s^{d_x}$, and $X_x, X_{x'}$ are disjoint for distinct $x,x'\in X$ because any string from $X_x$ will differ from any string in $X_{x'}$ at the position $i$ where $x(i)x'(i)$ is an edge of the clique on $[s]$. Then using Jensen's inequality we have
\[s^k \ge \sum_{x\in X} s^{d_x} \ge m\cdot s^{(\sum_{x\in X} d_x)/m} \ge m \cdot s^{\frac{(t-s)k}{t}},\]
and hence $m \le s^{sk/t}$, implying $C(\calG)\le \frac{1}{k}\log m \le \frac{s}{t}\log s$.
\end{proof}
Theorem \ref{thm:capacity} clearly follows from the case $s=t-1$. %In fact, Corollary \ref{cor:main} is equivalent to a more general statement about the size of a maximum family $X$ where for any $x,x'\in X$ and $A\subs[t]$ of size $t-s$, there is an $i$ such that $x(i)\ne x'_(i)$ and $x(i),x'(i)\notin A$.

\section{Rainbow saturation} \label{sec:main}

\begin{proof}[Proof of Theorem \ref{thm:main}]
For the lower bound, suppose $H$ is a $t$-edge-colored rainbow $K_s$-saturated graph, and split its vertices into two parts: let $A=\{a_1,\dots,a_k\}$ be the set of vertices of degree at least $d= \log^3 n$, and $B$ be the rest. We may assume $|A|\le \frac{n}{\log n}$ (otherwise $H$ has at least $\frac{1}{2}n\log^2n$ edges), and thus $B$ contains $m\ge (1-\frac{1}{\log n})n$ vertices. Now let us define a string $x_v\subs[t+1]^k$ for every $v\in B$ that encodes the colors of the $A$-$B$ edges touching $v$ as follows: $x_v(i)$ is $t+1$ if $a_iv$ is not an edge in $H$, otherwise it is the color of $a_iv$.
	
Assume, without loss of generality, that $t-s+3,\dots,t$ are the $s-2$ most common colors among the $A$-$B$ edges. For $v\in B$, let $X_v\subs [t-s+2]^k$ be the set of strings obtained from $x_v$ by replacing each $t-s+3,\dots,t+1$ with an arbitrary number from $[t-s+2]$. Then if $d_v$ denotes the number of $A$-$B$ edges in $H$ touching $v$ and $d'_v$ denotes the number of such edges of colors $t-s+3,\dots,t$, then $|X_v|=(t-s+2)^{k-d_v+d'_v}$.
	
We claim that if $v,w\in B$ are non-adjacent with no common neighbor in $B$, then $X_v$ and $X_w$ have no string in common. Indeed, adding the edge $vw$ of color $t$ creates a rainbow $K_s$ with $s-2$ vertices in $A$. So there must be an $a_i$ such that $a_iv$ and $a_iw$ have different colors, also differing from $t-s+3,\dots,t$. But then all the strings in $X_v$ have the color of $a_iv$ as their $i$'th letter, and all the strings in $X_w$ have the color of $a_iw$ as their $i$'th letter, so $X_v$ and $X_w$ are disjoint.
	
Since vertices in $B$ have degree at most $d$, each $v\in B$ has at most $d^2$ vertices $w\in B$ that are either adjacent to $v$ or have a common neighbor with $v$ in $B$. So each string in $[t-s+2]^k$ can appear in no more than $d^2+1$ collections $X_w$, and hence we get
\begin{align*}
	(d^2+1)(t-s+2)^k &\ge \sum_{v\in B} |X_v| = \sum_{v\in B} (t-s+2)^{k-d_v+d'_v}  \\
	d^2+1 &\ge \sum_{v\in B} (t-s+2)^{d'_v-d_v} \ge m\cdot (t-s+2)^{ \frac{1}{m} (\sum_{v\in B}d'_v - \sum_{v\in B}d_v )}
\end{align*}
using Jensen's inequality.

Now $t-s+3,\dots,t$ were the $s-2$ most common colors, so we also have $\sum_{v\in B}d'_v \ge \frac{s-2}{t}\sum_{v\in B}d_v$ and thus $\sum_{v\in B}d'_v - \sum_{v\in B}d_v \ge  \frac{s-2-t}{t}\sum_{v\in B} d_v$. 
Taking logs, we obtain
\[ \sum_{v\in B} d_v \ge \frac{t}{t-s+2}m\left(\log_{t-s+2} m  - \log_{t-s+2} (d^2+1)\right). \]
As the left-hand side is a lower bound on the number of edges in $H$, this establishes the desired lower bound (using $d=\log^3 n$ and $m=n+o(n)$).

\medskip
For the upper bound in the case of triangles, let $k$ be large enough, and take a set $X$ of size $m$ as provided by Theorem \ref{thm:capacity}. Consider a $k$-by-$m$ complete bipartite graph $G_0$ with parts $A$ and $B$, where $A=\{a_1,\dots,a_k\}$, and $B$ corresponds to the strings in $X$. For every vertex $v\in B$, we look at the corresponding string $x\in X$, and color each edge $va_i$ by the color $x(i)$. $G_0$ is clearly (rainbow) triangle-free, and by the definition of $X$, adding an edge to $G_0$ between two vertices of $B$ in any color $a\in[t]$ creates a rainbow triangle.

Now let $G$ be a maximal rainbow triangle-free supergraph of $G_0$. Then $G$ is rainbow triangle-saturated by definition, and compared to $G_0$, it only has new edges induced by $A$, thus it has at most $km+\binom{k}{2}$ edges. Here $n=k+m$ and $k=\frac{t(1+o(1))}{(t-1)\log(t-1)}\log m$, implying the required upper bound.
\end{proof}

For $s>3$ our lower bound is probably not tight. It would be interesting to determine the asymptotics of $\rsat_t(n,K_s)$ for general $s$.

\bigskip
\noindent\textbf{Acknowledgements.}
I thank Shagnik Das for finding \cite{GKV93} for me, and G\'abor Simonyi for some clarifications about capacities.


\begin{thebibliography}{99}


\bibitem{BFVW17} M.~D.~Barrus, M.~Ferrara, J.~Vandenbussche and P.~S.~Wenger,
\newblock{Colored saturation parameters for rainbow subgraphs},
\newblock{\em J. Graph Theory}, \textbf{86} (2017), 375-386.

\bibitem{BS07} B.~Bollob\'as and A.~Scott,
\newblock{Separating systems and oriented graphs of diameter two},
\newblock{\em J. Combin. Theory Ser. B} \textbf{97} (2007), 193-203.

\bibitem {CK} I.~Csisz\'ar and J.~K\"orner,
\newblock{Information Theory}, 2nd edition, Cambridge University Press, 2011.

\bibitem{EHM64} P.~Erd\H{o}s, A.~Hajnal and J.W.~Moon,
\newblock{A problem in graph theory},
\newblock{\em Amer. Math. Monthly}, \textbf{71} (1964), 1107-1110.

\bibitem{FJL} M.~Ferrara, D.~Johnston, S. Loeb, F.~Pfender, A.~Schulte, H.~C.~Smith, E.~Sullivan, M.~Tait and C.~Tompkins,
\newblock{On edge-colored saturation problems},
arXiv:1712.00163 preprint

\bibitem{FHPZ98} Z.~F\"uredi, P.~Horak, C.~M.~Pareek and X.~Zhu,
\newblock{Minimal oriented graphs of diameter 2},
\newblock{\em Graphs Combin.} \textbf{14} (1998), 345-350.

\bibitem{GKV93} L.~Gargano, J.~K\"orner and U.~Vaccaro,
\newblock{Sperner capacities},
\newblock{\em Graphs Combin.}, \textbf{9} (1993), 31-46.

\bibitem{GKV94} L.~Gargano, J.~K\"orner and U.~Vaccaro,
\newblock{Capacities: from information theory to extremal set theory},
\newblock{\em J. Combin. Theory Ser. A}, \textbf{68} (1994), 296-316.

\bibitem{GLP} A.~Gir\~{a}o, D.~Lewis and K.~Popielarz,
\newblock{Rainbow saturation of graphs},
arXiv:1710.08025 preprint

\bibitem{KS67} G.~Katona and E.~Szemer\'edi,
\newblock{On a problem of graph theory},
\newblock{\em Studia Sci. Math. Hungar.} \textbf{2} (1967), 23-28.

\bibitem{KS17} D.~Kor\'andi and B.~Sudakov,
\newblock{Saturation in random graphs},
\newblock{\em Random Structures Algorithms} \textbf{51} (2017), 169-181.

\bibitem{KLSS99} A.~V.~Kostochka, T.~\L uczak, G.~Simonyi and E.~Sopena,
\newblock{On the minimum number of edges giving maximum oriented chromatic number}, in:
\newblock{Contemporary Trends in Discrete Mathematics},
\newblock{\em DIMACS Series in Discrete Mathematics and Theoretical Computer Science}, vol 49. (1999), 179-182.

\bibitem {S} G.~Simonyi,
\newblock{Perfect graphs and graph entropy. An updated survey}, in:
\newblock{Perfect Graphs}, Wiley (2001), 293-328.

\bibitem{Z49} A.~Zykov,
\newblock{On some properties of linear complexes (in Russian)},
\newblock{\em Mat. Sbornik N. S. } \textbf{24} (1949), 163-188.


\end{thebibliography}
\end{document}